\definecolor{darkred}{rgb}{0.6,0,0}
\definecolor{darkgreen}{rgb}{0,0.5,0}
\definecolor{darkmagenta}{rgb}{0.5,0,0.5}
\newtheorem{theorem}{Theorem}
\newtheorem{lemma}[theorem]{Lemma}
\newcommand{\cond}[1]{\textup{(\textit{#1})}}
\DeclareMathOperator{\supp}{supp}
\DeclareMathOperator{\dist}{dist}
\newcommand{\NN}{\mathbb{N}}
\newcommand{\RR}{\mathbb{R}}
\newcommand{\sF}{\mathcal{F}}
\newcommand{\charfun}[1]{\mathbf{1}_{#1}}
\let\epsilon=\varepsilon
\DeclarePairedDelimiter{\abs}\lvert\rvert
\DeclarePairedDelimiter{\norm}\lVert\rVert
\DeclarePairedDelimiter{\parens}()
\DeclarePairedDelimiter{\braces}\{\}
\newcommand{\setof}[1]{\braces{\,#1\,}}
\newcommand{\given}{\mid}
\newcommand{\dee}{\mathop{}\!{d}} % replace d by \mathrm{d} if desired
\begin{document}
\title[Kolmogorov--Riesz compactness theorem]{An improvement of the Kolmogorov--Riesz compactness theorem}

\author[Hanche-Olsen]{Harald Hanche-Olsen}
\address[Hanche-Olsen]{\newline
    Department of Mathematical Sciences,
    NTNU Norwegian University of Science and Technology,
    NO--7491 Trondheim, Norway}
\email[]{\href{harald.hanche.olsen@ntnu.no}{harald.hanche.olsen@ntnu.no}}
\urladdr{\href{https://www.ntnu.edu/employees/harald.hanche-olsen}{https://www.ntnu.edu/employees/harald.hanche-olsen}}

\author[Holden]{Helge Holden}
\address[Holden]{\newline
    Department of Mathematical Sciences,
    NTNU Norwegian University of Science and Technology,
    NO--7491 Trondheim, Norway }
\email[]{\href{helge.holden@ntnu.no}{helge.holden@ntnu.no}}
\urladdr{\href{https://www.ntnu.edu/employees/helge.holden}{https://www.ntnu.edu/employees/helge.holden}}

\author[Malinnikova]{Eugenia Malinnikova}
\address[Malinnikova]{\newline
    Department of Mathematical Sciences,
    NTNU Norwegian University of Science and Technology,
    NO--7491 Trondheim, Norway }
\email[]{\href{eugenia.malinnikova@ntnu.no}{eugenia.malinnikova@ntnu.no}}
\urladdr{\href{https://www.ntnu.edu/employees/eugenia.malinnikova}{https://www.ntnu.edu/employees/eugenia.malinnikova}}

\date{\today}

\subjclass[2010]{Primary: 46E30, 46E35; Secondary: 46N20}

\keywords{Kolmogorov--Riesz compactness theorem, compactness in $L^p$.}

\thanks{Research by HH was supported in part by the grant Waves and Nonlinear Phenomena (WaNP) from the
Research Council of Norway.}

%\dedicatory{}

\begin{abstract}
The purpose of this short note is to provide a new and very short proof of a result by Sudakov \cite{Sudakov}, offering an important improvement of the  classical result by Kolmogorov--Riesz on compact subsets of Lebesgue spaces.
\end{abstract}

\errorcontextlines99

\maketitle
\section*{Introduction} \label{sec:intro}

The classical compactness theorem of Kolmogorov--Riesz reads as follows \cite{HOH}:

 A subset $\sF$ of $L^p(\RR^n)$, with  $1\le p<\infty$, is totally bounded if,
 and only if,
  \begin{enumerate}
  \renewcommand{\theenumi}{\alph{enumi}}
  \item $\sF$ is bounded,
  \item for every $\epsilon>0$ there is some $R$ so that, for
  every $f\in\sF$,\[\int_{\abs{x}>R}\abs{f(x)}^p\dee x<\epsilon^p,\]
  \item for every $\epsilon>0$ there is some $\rho>0$ so that, for
  every $f\in\sF$ and $y\in\RR^n$ with $\abs{y}<\rho$,
 \[\int_{\RR^n}\abs{f(x+y)-f(x)}^p\dee x<\epsilon^p.\]
  \end{enumerate}
The purpose of the current paper is to show that the boundedness condition \cond{a}
is redundant.

This was discovered by Sudakov \cite{Sudakov} in 1957,
but the paper appears undeservedly to have been lost in obscurity.
We want to revive the result and present a novel and very short proof
of the redundancy of \cond{a}.

The Kolmogorov--Riesz compactness theorem
was discovered by Kolmogorov \cite{Kolmogorov} in 1931.
He stated the result for a subset of $L^p(\RR^n)$, with $1<p<\infty$,
and the functions in the subset all supported in a common compact set
(thus essentially replacing $\RR^n$ by a bounded subset of $\RR^n$).
Tamarkin \cite{Tamarkin} extended the result to the case of unbounded support
by adding the assumption \cond{b},
and Tulajkov \cite{Tulajkov} extended the result to include $p=1$.
At the same time M.~Riesz \cite{Riesz} proved a similar result.
See \cite{HOH,HOH_add} for a historical account of this result,
various generalizations, and a proof.

The fact that condition \cond{a} is not needed
was only discovered in 1957 by Sudakov \cite{Sudakov}.
The late discovery of this fact
is probably due to a mistake by Tamarkin \cite{Tamarkin},
who presented an erroneous ``example'' in which \cond{b} and \cond{c}
are claimed to be true,
but \cond{a} is false.\footnote
{His example was as follows.
  Consider the family $\sF=\{f_n\}_{n\in\NN}\subset L^p(\RR)$,
  where $f_n(x)=(f(x)+n)\charfun{(0,1)}(x)$ for any $f\in L^p(\RR)$.
  Clearly, $\sF$ satisfies \cond{b},
  but neither condition \cond{a} nor \cond{c},
  and $\sF$ is not totally bounded.}
Sudakov \cite{Sudakov} states that Tamarkin's mistake was discovered by Natanson,
but gives no reference.
The result by Sudakov has recently been revisited
in the context of  metric measure spaces in \cite{GorkaRafeiro}.
See also \cite{Brudnyi,Rafeiro}. These proofs all rely intrinsically on the
approach by Sudakov, but are applied to more general spaces.

The Kolmogorov--Riesz compactness theorem is really a classical textbook result,
and it is always stated as giving necessary and sufficient conditions
for a subset of a Lebesgue space to be compact.
The fact that one condition is not needed should be more widely known,
and this is our reason for publishing this result.

\section*{The improved Kolmogorov--Riesz--Sudakov compactness result}

Thanks to Sudakov's discovery,
the original Kolmogorov--Riesz theorem
admits the following improvement:

\begin{theorem}[Kolmogorov--Riesz--Sudakov] \label{thm:krs}
  Let $1\le p<\infty$.
  A subset $\sF$ of $L^p(\RR^n)$ is totally bounded if,
  and only if,
  \begin{enumerate}
  \renewcommand{\theenumi}{\roman{enumi}}
 % \item $\sF$ is bounded,
  \item for every $\epsilon>0$ there is some $R$ so that, for
  every $f\in\sF$,\[\int_{\abs{x}>R}\abs{f(x)}^p\dee x<\epsilon^p,\]
  \item for every $\epsilon>0$ there is some $\rho>0$ so that, for
  every $f\in\sF$ and $y\in\RR^n$ with $\abs{y}<\rho$,
 \[\int_{\RR^n}\abs{f(x+y)-f(x)}^p\dee x<\epsilon^p.\]
  \end{enumerate}
\end{theorem}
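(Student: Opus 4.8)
The plan is to reduce everything to the classical Kolmogorov--Riesz theorem quoted in the introduction, so that the only genuinely new point --- the redundancy of \cond{a} --- is the one needing work. Necessity is immediate: a totally bounded set is bounded, and the classical theorem supplies all of \cond{a}, \cond{b}, \cond{c}, in particular \cond{i} and \cond{ii}. For sufficiency it suffices to prove that \cond{i} and \cond{ii} together force $\sF$ to be bounded in $L^p(\RR^n)$; granting that, all three classical conditions hold and the classical theorem delivers total boundedness.

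To prove boundedness I would argue by contradiction. Assume there are $f_k\in\sF$ with $M_k:=\norm{f_k}_p\to\infty$; discarding zero functions we may set $h_k:=f_k/M_k$, so $\norm{h_k}_p=1$. Since translation is an isometry of $L^p$, the integrals appearing in \cond{i} and \cond{ii} for $\sF$ get divided by $M_k^p\to\infty$ when passed to the $h_k$; together with the trivial membership of each individual $h_k$ in $L^p$ (which handles the finitely many small indices), this makes $\setof{h_k}$ satisfy \cond{i} and \cond{ii} uniformly in $k$. Being also bounded, $\setof{h_k}$ is totally bounded by the classical Kolmogorov--Riesz theorem, so a subsequence converges in $L^p$, say $h_{k_j}\to h$, whence $\norm{h}_p=1$.

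The contradiction now comes from \cond{ii} applied to $\sF$ itself with a single fixed $\epsilon>0$, which yields $\rho>0$ such that, writing $\tau_y g:=g(\cdot+y)$, for all $\abs{y}<\rho$ and all $j$,
\[
  \norm{\tau_y h_{k_j}-h_{k_j}}_p
  =\frac{1}{M_{k_j}}\,\norm{\tau_y f_{k_j}-f_{k_j}}_p
  <\frac{\epsilon}{M_{k_j}} ,
\]
and the right-hand side tends to $0$ as $j\to\infty$. Passing to the limit and using $h_{k_j}\to h$ in $L^p$ gives $\tau_y h=h$ for $\abs{y}<\rho$, hence $\tau_y h=h$ for every $y\in\RR^n$ by composing translations. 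But a function in $L^p(\RR^n)$ invariant under all translations agrees a.e.\ with its mollification, which is a continuous translation-invariant --- hence constant --- function, and the only constant lying in $L^p(\RR^n)$ with $p<\infty$ is $0$. This contradicts $\norm{h}_p=1$, so $\sF$ is bounded and the proof is complete.

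The one delicate point is of course the boundedness step: one must pin down exactly why \cond{ii} excludes Tamarkin's kind of blow-up, and the renormalization above makes this transparent --- a renormalized blow-up would converge to a nonzero translation-invariant element of $L^p(\RR^n)$, which does not exist. If one prefers not to invoke the classical theorem for the auxiliary family $\setof{h_k}$, there is a self-contained alternative: decompose each $f\in\sF$ into the family of its averages $a_j$ over a grid of cubes $Q_j$ of diameter less than $\rho$; a Jensen/Poincar\'e estimate together with \cond{ii} controls both $\norm{f-\sum_j a_j\charfun{Q_j}}_p$ and the gaps $\abs{a_j-a_{j'}}$ between neighbouring averages, while \cond{i} bounds the averages over far-away cubes; chaining a bounded number of neighbour steps from any cube out to the far region then bounds every $a_j$ by a constant independent of $f$, and summing yields a uniform bound on $\norm{f}_p$.
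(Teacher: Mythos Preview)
Your argument is correct but follows a genuinely different line from the paper. The paper gives a direct, quantitative bound: with $\epsilon=1$ and the corresponding $R$ and $\rho$, one has
\[
\norm{f\charfun{B_R(z)}}_p \le \norm{T_yf-f}_p + \norm{f\charfun{B_R(z-y)}}_p \le 1 + \norm{f\charfun{B_R(z-y)}}_p
\]
for any $\abs{y}<\rho$, and iterating $N$ times with $N\abs{y}>2R$ slides $B_R(0)$ entirely into the region where \cond{i} controls $f$, yielding $\norm{f}_p\le N+2$ uniformly. Your route is instead a soft contradiction by renormalization: the normalized $h_k$ inherit \cond{i} and \cond{ii}, are bounded, hence (invoking classical Kolmogorov--Riesz an extra time, for this auxiliary family) subconverge in $L^p$ to some $h$ of unit norm, which \cond{ii} then forces to be translation-invariant and therefore zero. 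The paper's method buys an explicit constant and needs no auxiliary compactness input at all; yours makes the obstruction to Tamarkin-type blow-up conceptually transparent. Your closing cube-chaining sketch is actually quite close in spirit to the paper's sliding argument.
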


\medskip\noindent\textit{Remark.}
  Observe that in the case where $\sF$ is a subset of $L^p(\Omega)$,
  where $\Omega$ is a bounded subset  of $\RR^n$,
  only the condition of ``$L^p$ equicontinuity'',
  that is, condition \cond{ii},
  is necessary and sufficient for  $\sF$  to be totally bounded.
  However, this condition must be interpreted with care,
  by identifying $L^p(\Omega)$ with a subspace of  $L^p(\RR^n)$.
  Thus the behavior of functions in $\sF$ at the boundary of $\Omega$
  will influence whether \cond{ii} holds or not.
  This can be illustrated by the failure of Tamarkin's example;
  see the footnote in the introduction.

\medskip

Before embarking on the proof,
we establish some notation.
Throughout, $B_r(x)$ denotes the open ball of radius $r$ centered at $x\in\RR^n$.
We sometimes write $B_r$ instead of $B_r(0)$.
We write $\charfun{A}$ for the characteristic function
of a set $A\subseteq\RR^n$.
The translation operator $T_y$ is defined by $T_yf(x)=f(x-y)$.
When $\Omega\subseteq\RR^n$,
we identify $L^p(\Omega)$ with the set of functions in $L^p(\RR^n)$
vanishing outside $\Omega$.
We write $X_1$ for the closed unit ball of any normed space $X$.

In light of the classical Kolmogorov--Riesz theorem, see, e.g., \cite{HOH},
the following is all that is required to prove  Theorem \ref{thm:krs}:

\begin{proof}[Proof that \cond{b} and \cond{c} imply \cond{a}]
  Assume that conditions \cond{b} and \cond{c} are satisfied.
  Due to condition \cond{b} we only need to bound
  the norm uniformly on some sufficiently large ball.
  The idea is that by \cond{c},
  small translation are uniformly close to the identity in the $L^p(\RR^n)$ norm.
  By restricting to a ball, and repeating the small translation,
  we can get an estimate of the norm on a ball
  by the norm on a translated ball
  that is contained in the domain of integration in \cond{b},
  which gives the uniform bound we want.

  More precisely, fix $\epsilon=1$ and let
  $R>0 $ and $\rho>0$ be the corresponding quantities
  given by \cond{b} and \cond{c}.
  For any $f\in\sF$,
  using the triangle inequality and a translation, we infer%
 \begin{align*}
   \norm{f\charfun{B_R(z)}}_p
   &\le \norm{(T_yf-f)\charfun{B_R(z)}}_p +\norm{f\charfun{B_R(z-y)}}_p \\
   &\le \norm{(T_yf-f)}_p +\norm{f\charfun{B_R(z-y)}}_p\\
   &\le 1+\norm{f\charfun{B_R(z-y)}}_p.
\end{align*}
Here $y\in\RR^n$ is any nonzero vector with $\abs{y}<\rho$.
By induction, we find that
 \[
   \norm{f\charfun{B_R(0)}}_p\le  N+ \norm{f\charfun{B_R(-Ny)}}_p.
 \]
Choosing $N$ so that $N\abs{y}> 2R$,
we see that $B_{R}(-Ny)\cap B_{R}(0)=\emptyset$, and
 \[
   \norm{f}_p
   = \norm{f\charfun{B_{R}(0)}}_p+  \norm{f\charfun{\RR^n\setminus B_{R}(0)}}_p
   \le  N+ 2,
 \]
uniformly in $f$.

\end{proof}

Sudakov states the theorem with the translate $T_yf$
in \cond{ii} replaced by the \emph{Steklov mean}
\[
  S_hf(x)=\abs{B_h}^{-1}\int_{B_h}f(x+y) \dee y=\abs{B_h}^{-1} f*\charfun{B_h}(x)
\]
for sufficiently small $h$,
where $\abs{B_h}$ denotes the volume of $B_h$.
Clearly, the revised condition follows from \cond{ii},
but the converse is far from obvious.
We  show that Sudakov's condition can also be used
instead of  \cond{ii} to estimate the $L^p$-norm:

\begin{theorem}[Kolmogorov--Riesz--Sudakov] \label{thm:krsMEAN}
  Theorem \ref{thm:krs} holds with condition \cond{ii} replaced by\textup{:}
 \begin{enumerate}
  \item[\cond{ii'}]
    For every $\epsilon>0$ there is some $\rho>0$ so that, for
    every $f\in\sF$ and $h$ with $0<h<\rho$,
    \[\int_{\RR^n}\abs{f(x)-S_hf(x)}^p\dee x<\epsilon^p.\]
  \end{enumerate}
\end{theorem}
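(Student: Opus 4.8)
The plan is to reduce the theorem to Theorem~\ref{thm:krs} by showing that, in the presence of \cond{i}, condition \cond{ii'} is equivalent to condition \cond{ii}. One half is the easy implication already noted: from $f-S_hf=\abs{B_h}^{-1}\int_{B_h}(f-T_{-y}f)\dee y$ and Minkowski's integral inequality one gets $\norm{f-S_hf}_p\le\sup_{\abs{y}<h}\norm{T_yf-f}_p$, so \cond{ii}$\Rightarrow$\cond{ii'} (and, by the classical Kolmogorov--Riesz theorem, total boundedness implies \cond{i} and \cond{ii'}). For the converse the obstacle is that \cond{ii'} controls only an \emph{average} of the translates $T_{-y}f-f$ over $y\in B_h$, so cancellation could hide a large individual translate; but \cond{ii'} does control the translates once a uniform $L^p$ bound on $\sF$ is available, since $T_yS_hf-S_hf=\abs{B_h}^{-1}(\charfun{B_h(y)}-\charfun{B_h})*f$ and Young's inequality bounds this by $\abs{B_h}^{-1}\abs{B_h(y)\triangle B_h}\,\norm{f}_p$, which is small for small $\abs{y}$. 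So the real task is to show that \cond{i} and \cond{ii'} already force $\sup_{f\in\sF}\norm{f}_p<\infty$; granting this, \cond{ii} follows and Theorem~\ref{thm:krs} finishes the proof.

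To obtain the uniform bound, fix $\epsilon=1$, let $\rho$ and $R$ be the quantities furnished by \cond{ii'} and \cond{i}, and fix $h=\rho/2$. For $f\in\sF$ set $m(z)=\norm{f\charfun{B_R(z)}}_p$; then $m$ is bounded (by $\norm{f}_p$) and $m(z)<1$ whenever $\abs{z}>2R$, because then $B_R(z)\subseteq\setof{\abs{x}>R}$. Writing $f\charfun{B_R(z)}=(f-S_hf)\charfun{B_R(z)}+(S_hf)\charfun{B_R(z)}$, the first term has $L^p$-norm less than $1$, while Minkowski's integral inequality applied to $(S_hf)\charfun{B_R(z)}(x)=\abs{B_h}^{-1}\int_{B_h}\charfun{B_R(z)}(x)\,f(x+y)\dee y$ bounds the second by $\abs{B_h}^{-1}\int_{B_h}m(z+y)\dee y$. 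Hence $m$ obeys the sub-averaging estimate
\[
  m(z)\le 1+\abs{B_h}^{-1}\int_{B_h}m(z+y)\dee y\qquad(z\in\RR^n).
\]
The operator on the right is monotone and fixes constants, so iterating $N$ times yields $m\le N+k_N*m$, where $k_N$ is the $N$-fold convolution of the probability density $\abs{B_h}^{-1}\charfun{B_h}$ with itself.

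Next I would split $m=m\charfun{B_{2R}}+m\charfun{\RR^n\setminus B_{2R}}$. The second summand is everywhere $<1$, so $k_N*(m\charfun{\RR^n\setminus B_{2R}})\le1$; the first gives $k_N*(m\charfun{B_{2R}})\le S\,\norm{k_N}_\infty\abs{B_{2R}}$, where $S:=\sup_z m(z)<\infty$. Thus $m(z)\le N+1+S\,\norm{k_N}_\infty\abs{B_{2R}}$ for every $z$, and taking the supremum,
\[
  S\le N+1+S\,\norm{k_N}_\infty\abs{B_{2R}}.
\]
The decisive point is that $\norm{k_N}_\infty\to0$ as $N\to\infty$. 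Indeed $\abs{B_h}^{-1}\charfun{B_h}\in L^2(\RR^n)$, so its Fourier transform $\widehat{k_1}$ lies in $L^2$ and satisfies $\abs{\widehat{k_1}}\le1$, with strict inequality away from the origin; therefore, for $N\ge2$, $\widehat{k_N}=\widehat{k_1}^N\in L^1$ and $\norm{k_N}_\infty\le(2\pi)^{-n}\int\abs{\widehat{k_1}}^N$, which tends to $0$ by dominated convergence, the integrand decreasing pointwise a.e.\ to $0$ and being dominated by $\abs{\widehat{k_1}}^2\in L^1$. Choosing $N=N(R,\rho,n)$ with $\norm{k_N}_\infty\abs{B_{2R}}\le\tfrac12$ gives $S\le2(N+1)$; covering $B_{2R}$ by a number $K_n$ of balls of radius $R$ then yields $\norm{f}_p\le\norm{f\charfun{B_{2R}}}_p+1\le K_n^{1/p}S+1$, a bound independent of $f\in\sF$.

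With $M:=\sup_{f\in\sF}\norm{f}_p<\infty$ established, condition \cond{ii} follows from \cond{ii'}: given $\epsilon>0$, take $\rho$ from \cond{ii'} with constant $\epsilon/3$, fix $h=\rho/2$, and for $\abs{y}<h$ write $T_yf-f=T_y(f-S_hf)+(T_yS_hf-S_hf)+(S_hf-f)$; the two outer terms total less than $2\epsilon/3$, and by Young's inequality the middle term is at most $\abs{B_h}^{-1}\norm{\charfun{B_h(y)}-\charfun{B_h}}_1\,M$, which is $<\epsilon/3$ once $\abs{y}$ is small enough. Then Theorem~\ref{thm:krs}, applied to \cond{i} and \cond{ii}, shows that $\sF$ is totally bounded, and the proof is complete. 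The only non-elementary ingredient is the spreading estimate $\norm{k_N}_\infty\to0$ for the iterated Steklov kernel; everything else uses just the triangle, Minkowski, and Young inequalities, a covering argument, and Theorem~\ref{thm:krs}.
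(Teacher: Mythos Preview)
Your argument is correct. The boundedness step shares the paper's core idea: both set up the sub-averaging inequality for the local norm $m(z)=\norm{f\charfun{B_R(z)}}_p$ and iterate to $m\le N+k_N*m$ with $k_N=\phi^{*N}$. The difference is in how the contraction is extracted. The paper observes, with no Fourier analysis, that once $Nh>2R$ the kernel $k_N$ is strictly positive on a ball of radius $Nh$, so $\gamma=\max_y\int_{B_{2R}}k_N(y-u)\dee u<1$; this immediately gives $S\le\gamma S+N+1$. You instead bound $\int_{B_{2R}}k_N(y-u)\dee u\le\norm{k_N}_\infty\abs{B_{2R}}$ and drive this to zero via $\norm{k_N}_\infty\le(2\pi)^{-n}\int\abs{\widehat{k_1}}^N\to0$. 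Your route is valid but less elementary; the paper's support argument avoids the Fourier transform entirely.

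The passage from boundedness to total boundedness is genuinely different. The paper does not recover condition \cond{ii}; instead it invokes Lemma~\ref{lm:cc} (compactness of convolution against a compactly supported $L^q$ kernel, via Arzel\`a--Ascoli) to see that $\Phi_Rf=(f\charfun{B_R})*\phi*\phi$ sends the now-bounded $\sF$ into a totally bounded set, with $\norm{f-\Phi_Rf}_p<3\epsilon$. You take the alternative route of showing that boundedness together with \cond{ii'} actually implies \cond{ii}, via the decomposition $T_yf-f=T_y(f-S_hf)+(T_yS_hf-S_hf)+(S_hf-f)$ and Young's inequality on the middle term, and then simply quote Theorem~\ref{thm:krs}. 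This is a clean reduction that makes explicit the equivalence of \cond{ii} and \cond{ii'} in the presence of a uniform $L^p$ bound, whereas the paper's argument bypasses \cond{ii} altogether at the cost of an auxiliary compactness lemma.
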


We will need a lemma.
\begin{lemma} \label{lm:cc}
  Assume that
  $p$ and $q$ are conjugate exponents with $1\le p < \infty$, and that
  $\phi\in L^q(\RR^n)$ has compact support.
  If $p=1$, assume further that $\phi$ is continuous.
  Let $K\subset\RR^n$ be compact.
  Then the map $\Phi\colon L^p(K)\to L^p(\RR^n)$ defined by $\Phi f=\phi*f$
  is compact.
\end{lemma}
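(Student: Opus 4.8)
The plan is to show that $\Phi$ maps the closed unit ball $X_1$ of $X:=L^p(K)$ to a precompact subset of $L^p(\RR^n)$ (equivalently, a totally bounded one, $L^p(\RR^n)$ being complete), which is exactly the assertion that $\Phi$ is compact. The tool will be the Arzel\`a--Ascoli theorem, applied after observing that the image $\mathcal G:=\Phi(X_1)$ consists of continuous functions, all supported in a single compact set, that are uniformly bounded and equicontinuous.

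Write $S=\supp\phi$, a compact set. Since $\phi\in L^q$ has compact support, H\"older's inequality gives $\phi\in L^1(\RR^n)$; hence by Young's inequality $\Phi$ is bounded, and for $f\in L^p(K)$ the function $\Phi f=\phi*f$ vanishes outside the compact set $K+S$, so $\Phi$ maps into $L^p(K+S)$. For $f\in X_1$ and $x,x'\in\RR^n$, H\"older's inequality together with translation invariance of Lebesgue measure gives $\abs{\Phi f(x)}\le\norm{\phi}_q$ and
\[
  \abs{\Phi f(x)-\Phi f(x')}=\abs*{\int_{\RR^n}\bigl(\phi(x-t)-\phi(x'-t)\bigr)f(t)\dee t}\le\norm{\phi-T_{x-x'}\phi}_q .
\]
Thus, setting $\omega(\delta)=\sup_{\abs{y}\le\delta}\norm{\phi-T_y\phi}_q$, the family $\mathcal G$ is bounded by $\norm{\phi}_q$ and each $g\in\mathcal G$ obeys $\abs{g(x)-g(x')}\le\omega(\abs{x-x'})$; in particular every element of $\mathcal G$ is continuous.

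The one point that needs care — and the main obstacle — is showing $\omega(\delta)\to0$ as $\delta\to0^+$. For $1<p<\infty$ the conjugate exponent is finite, and this is just the continuity of translation in $L^q(\RR^n)$. For $p=1$, however, $q=\infty$, and translation is \emph{not} norm-continuous on $L^\infty$ in general; this is precisely what the extra hypothesis on $\phi$ is for: a compactly supported continuous function is uniformly continuous, so $\norm{\phi-T_y\phi}_\infty=\sup_t\abs{\phi(t)-\phi(t-y)}\to0$.

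Granting this, $\mathcal G$ is a uniformly bounded, equicontinuous family of continuous functions on the compact set $K+S$, all vanishing off it, so Arzel\`a--Ascoli shows that $\mathcal G$ is precompact in $C(K+S)$ for the uniform norm. Since $K+S$ has finite Lebesgue measure, extension by zero is a bounded map $C(K+S)\to L^p(\RR^n)$, hence carries the precompact set $\mathcal G$ to a precompact subset of $L^p(\RR^n)$; as this image is again $\mathcal G$, we conclude that $\Phi$ is compact. (One could instead verify conditions \cond{i} and \cond{ii} of Theorem~\ref{thm:krs} for $\mathcal G$ directly — \cond{i} because $\mathcal G\subseteq L^p(K+S)$, and \cond{ii} because $T_y(\phi*f)-\phi*f=(T_y\phi-\phi)*f$ has $L^p$-norm at most $\norm{T_y\phi-\phi}_1\norm{f}_p\to0$ — and then invoke the Kolmogorov--Riesz--Sudakov theorem.)
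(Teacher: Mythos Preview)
Your proof is correct and follows essentially the same route as the paper's: you establish equicontinuity and uniform boundedness of $\{\phi*f: f\in L^p(K)_1\}$ via the H\"older estimate $\abs{\phi*f(x)-\phi*f(x')}\le\norm{T_{x-x'}\phi-\phi}_q$, note that the extra continuity hypothesis on $\phi$ is exactly what makes this work when $q=\infty$, apply Arzel\`a--Ascoli on the common compact support $K+\supp\phi$, and then pass from the uniform norm to the $L^p$ norm. The parenthetical alternative via Theorem~\ref{thm:krs} is a nice observation and is not circular, since that theorem is proved independently of the lemma.
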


\begin{proof}
  First note that $y\mapsto T_y\phi$ is a continuous map $\RR^n\to L^q(\RR^n)$
  (see, e.g., \cite[Prop. 20.1]{DiBenedetto}).
  It immediately follows that the set of functions
  $\setof{\phi*f\given f\in L^p(\RR^n)_1}$
  is equicontinuous, since
  \[
    \begin{split}
      \abs{\phi*f(x-y)-\phi*f(x)}
      &=\abs{(T_y\phi-\phi)*f(x)}\\&
      \le\norm{T_y\phi-\phi}_q\cdot\norm{f}_p
      \le\norm{T_y\phi-\phi}_q
    \end{split}
  \]
  for any $f\in L^p(\RR^n)_1$
  (the continuity of $\phi$ when $p=1$
  is required in order to make $\norm{T_y\phi-\phi}_q$ small even in that case).
  A similar estimate shows that this set of functions
  is uniformly bounded.
  Since all functions $\phi*f$ with $f\in L^p(K)$
  are supported by the compact set $K+\supp \phi$,
  we can now employ the Arzel\`a--Ascoli theorem
  to conclude that
  $\setof{\phi*f\given f\in L^p(K)_1}$
  is totally bounded in the uniform norm.
  Again, because of the shared compact support,
  this implies compactness in $L^p(\RR^n)$.
\end{proof}

\begin{proof}[Proof of Theorem \ref{thm:krsMEAN}]
  \label{proof:krsMEAN}
  In the proof of Theorem \ref{thm:krs},
  we employed repeated translations to move the support of $T_{-Ny}f$
  outside a large ball.
  Here we use instead repeated applications of the convolution operator  (the Steklov mean) $S_h$
  for a similar purpose, getting
  some weighted average of $f$.
We cannot move the whole weight to the complement of some fixed ball as before, however.
Instead, we notice that the total weight is one,
but some fixed part of it is moved to this complement.

To make this presice,
we start by fixing $R$ as given by \cond{i}
and $\rho$ as given by \cond{ii'},
both with $\epsilon=1$.
Let $0<h<\rho$, and put $\phi=\abs{B_h}^{-1}\charfun{B_h}$.
Select a natural number $N$ so that $Nh>2R$, and put
\[
  \psi=\phi^{*N}=\underbrace{\phi*\cdots*\phi}_{\text{$N$ times}}.
\]
Writing the $N$-fold convolution as an ($N-1$)-fold
integral over $z_1$, \ldots, $z_{N-1}\in\RR^{n}$
and setting $z_N=x-z_1-\cdots-z_{N-1}$, we can write this as
\[
  \psi(x)=\mathop{\int\cdots\int}\limits_{z_1+\cdots+z_N=x}
  \phi(z_1)\cdots\phi(z_N)\dee z_1\cdots\dee z_{N-1},
\]
from which
it follows that $\psi(x)>0$ when $\abs{x}<Nh$,
and $\psi(x)=0$ otherwise.
Note also that $\int_{\RR^n}\psi\dee x=1$.

Now fix some $f\in\sF$, and define
\[
  A(y)=\norm{f\charfun{B_R(y)}}_p
  =\parens[\Big]{\int_{B_R}\abs{f(x+y)}^p\dee x}^{1/p}.
\]
Our task is to find a bound for $A(0)$, independent of $f$.
Together with \cond{i}, this will establish
a uniform bound on $\norm{f}_p$ for $f\in\sF$.

The function $A$ is continuous,
since we can also write
$A(y)=\norm{(T_yf)\charfun{B_R(0)}}_p$.
Further, condition \cond{i} implies that $A(y)<1$
for $\abs{y}\ge2R$,
so $A$ is certainly bounded.
Let $M=\sup_{y\in\RR^n}A(y)$.

To estimate $A(y)$, we break it up as follows:
\begin{equation}\label{eq:A1}
  A(y) \le \norm{(f*\psi)\charfun{B_R(y)}}_p + \norm{(f*\psi-f)\charfun{B_R(y)}}_p.
\end{equation}
For the first term,
the continuous Minkowski inequality
(see, e.g., \cite[Prop.~4.3 (p.~227)]{DiBenedetto}) yields
\[
  \begin{split}
    \norm{(f*\psi)\charfun{B_R(y)}}_p
    &=\parens[\bigg]{
      \int_{B_R(y)}\abs[\Big]{\int_{\RR^n} f(x-u)\psi(u)\dee u}^p\dee x}^{1/p}\\
    &\le \int_{\RR^n}\parens[\bigg]{\int_{B_R(y)}\abs{f(x-u)^p}\dee x}^{1/p}\psi(u)\dee u\\
    %&= \int_{\RR^n}\parens[\bigg]{\int_{B_R(y-u)}\abs{f(x)^p}\dee x}^{1/p}\psi(u)\dee u\\
    &= A*\psi(y).
  \end{split}
\]
As for the second term of \eqref{eq:A1},
first note that condition \cond{ii'} with $\epsilon=1$ can be written
 $\norm{f*\phi-f}_p<1$.
Furthermore, $\norm{g*\phi}_p\le\norm{g}_p$ for any $g \in L^p(\RR^n)$
(as seen, e.g., by another application of the continuous Minkowski inequality).
Thus we find
$\norm{f*\phi^{*(k+1)}-f*\phi^{*k}}_p
\le\norm{(f*\phi-f)*\phi^{*k}}_p\le\norm{f*\phi-f}_p<1$,
so  we have
\begin{equation} \label{eq:rconvnorm}
  \norm{f*\phi^{*k}-f}_p \le k \qquad (f \in \sF).
\end{equation}
In particular, $\norm{f*\psi-f}_p \le N$,
and so \eqref{eq:A1} reduces to
\[
  A \le A*\psi+N.
\]
However,
\[
  \begin{split}
    A*\psi(y)
    &=\int_{\RR^n} A(u)\psi(y-u)\dee u\\
    &\le M\int_{B_{2R}}\psi(y-u)\dee u+\int_{\RR \setminus B_{2R}}\psi(y-u)\dee u\\
    &\le M\gamma+1,
  \end{split}
\]
where
\[
  \gamma=\max_{y\in\RR^n}\int_{B_{2R}}\psi(y-u)\dee u<1.
\]
Indeed, note that the above integral is a continuous function of $y$,
with compact support, so it achieves its maximum.
But the integral is always less than $1$,
because the integrand is strictly positive in a ball of radius $Nh>2R$.

To summarize, we have $M=\sup_yA(y) \le M\gamma+1+N$,
and therefore $M \le (1+N)/(1-\gamma)$.
Since this estimate is independent of $f$,
we have now proved that $\sF$ is bounded in $L^p(\RR^n)$.

To finish the proof,
let $\epsilon>0$, once more pick $R>0$ and $\rho>0$
according to conditions \cond{i} and \cond{ii'},
and let $\phi=\abs{B_h}^{-1}\charfun{B_h}$,
where $0 < h < \rho$.
Define the linear map $\Phi_R\colon\sF \to L^p(\RR^n)$ by
\[
  \Phi_R f = (f\charfun{B_R})*\phi*\phi
\]
(we may replace $\phi*\phi$ by $\phi$, if $p \ne 1$).
It is compact, by Lemma \ref{lm:cc}.
Therefore, since $\sF$ is bounded,
$\Phi_R\sF$ is totally bounded.
Now, for any $f\in\sF$,
\[
  \norm{f-\Phi_R f}_p
  \le \norm{f-f*\phi*\phi}_p+\norm{(f-f\charfun{B_R})*\phi*\phi}_p
  <2\epsilon+\epsilon=3\epsilon.
\]
Here the first norm estimate comes from \eqref{eq:rconvnorm},
while the second one is due to \cond{i}
and the general fact that $\norm{g*\phi}_p \le \norm{g}_p$.

Thus any member of $\sF$ is within a distance $3\epsilon$
of some member of the totally bounded set $\Phi_R\sF$,
and so $\sF$ itself is totally bounded.
\end{proof}

\section*{Review of the original proof of Sudakov}
For the benefit of the reader we review Sudakov's original argument,
which is interesting for two reasons.
First of all it is quite different from other proofs of this theorem,
and, furthermore,
it uses only conditions \cond{i} and \cond{ii'}
without involving the uniform boundedness.
We start by stating and proving two general results.

\begin{theorem}[{Mazur, see \cite[p.~466]{Banach}}] \label{tm:mazur}
  Let $G$ be a bounded subset of a Banach space $X$.
  Assume that $(U_k)$ is a sequence of compact operators on $X$
  converging to the identity operator in the strong operator topology,
  i.e., $\norm{U_kx-x}\to0$ for all $x\in X$.
  Then $G$ is totally bounded if, and only if,
  $\norm{U_kx-x}\to0$ uniformly for $x\in G$.
\end{theorem}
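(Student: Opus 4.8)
The plan is to prove the two implications separately. The only genuinely delicate point is that the operators $U_k$ must be shown to be \emph{uniformly} bounded before the standard $\epsilon$-net argument can be run for the forward direction; this is where the hypothesis that $X$ is a Banach space enters, via the Banach--Steinhaus theorem.

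For the implication ``$G$ totally bounded $\Rightarrow$ $\norm{U_kx-x}\to0$ uniformly on $G$'': since $U_kx\to x$ for every $x\in X$, in particular $\sup_k\norm{U_kx}<\infty$ pointwise, so Banach--Steinhaus gives $C:=\sup_k\norm{U_k}<\infty$. Fix $\epsilon>0$ and choose a finite $\epsilon$-net $x_1,\dots,x_m$ for $G$; pick $K$ with $\norm{U_kx_i-x_i}<\epsilon$ for all $i\le m$ and $k\ge K$. For arbitrary $x\in G$, take $i$ with $\norm{x-x_i}<\epsilon$; then for $k\ge K$,
\[
  \norm{U_kx-x}\le\norm{U_k(x-x_i)}+\norm{U_kx_i-x_i}+\norm{x_i-x}<(C+2)\epsilon.
\]
Note that compactness of the $U_k$ is not used here at all.

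For the reverse implication: fix $\epsilon>0$ and, using the assumed uniform convergence, choose a single $k$ with $\norm{U_kx-x}<\epsilon$ for all $x\in G$. Since $G$ is bounded and $U_k$ is compact, $U_k(G)$ is relatively compact, hence totally bounded, so it admits a finite $\epsilon$-net $y_1,\dots,y_m$. For any $x\in G$ there is $j$ with $\norm{U_kx-y_j}<\epsilon$, whence $\norm{x-y_j}\le\norm{x-U_kx}+\norm{U_kx-y_j}<2\epsilon$. Thus $y_1,\dots,y_m$ is a finite $2\epsilon$-net for $G$, so $G$ is totally bounded.

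I expect no serious obstacle; the one spot requiring care is the appeal to Banach--Steinhaus in the forward direction, since pointwise convergence $U_kx\to x$ does not by itself furnish the uniform operator bound needed to control $\norm{U_k(x-x_i)}$ as $x$ ranges over the net. The rest is a routine juggling of two finite $\epsilon$-nets.
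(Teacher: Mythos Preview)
Your proof is correct and matches the paper's argument essentially line for line: the paper also invokes Banach--Steinhaus to obtain a uniform bound $\norm{U_k}\le M$ and then runs the same triangle-inequality estimate over a finite $\epsilon$-net for the ``totally bounded $\Rightarrow$ uniform'' direction, and uses compactness of a single $U_k$ on the bounded set $G$ for the converse. Your observation that compactness is unused in the forward direction is apt and consistent with the paper.
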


\begin{proof}
  First, assume that $\norm{U_kx-x}\to0$ uniformly for $x\in G$.
  Then for any $\epsilon>0$, there is some $k$
  so that $\dist(x,U_kG)<\epsilon$ for all $x\in G$.
  The image $U_kG$ is totally bounded,
  because $G$ is bounded and $U_k$ is compact.
  The total boundedness of $G$ follows.

  Conversely, assume $G$ is totally bounded.
  Apply the Banach--Steinhaus theorem
  to get a uniform bound $\norm{U_k}\le M$ for all $k$.
  If $\epsilon>0$, there is an $\epsilon$-net $F\subseteq G$:
  A~finite set so that every point in $G$
  is within a distance $\epsilon$ from some member of~$F$.
  If $k$ is large enough, $\norm{U_ky-y}\le\epsilon$ for all $y\in F$.
  For any $x\in G$, then, there is some $y\in F$ with $\norm{y-x}<\epsilon$,
  and so
  \[
    \norm{U_kx-x}\le\norm{U_k(x-y)}+\norm{U_ky-y}+\norm{y-x}
    <M\epsilon+\epsilon+\epsilon=(M+2)\epsilon.
  \]
  Since $M$ is fixed and $\epsilon$ is arbitrary,
  $\norm{U_kx-x}\to0$ uniformly for $x\in G$.
\end{proof}

\begin{lemma}[Sudakov \cite{Sudakov}] \label{lm:sudakov}
  Assume that $X$ is a Banach space, and $G\subseteq X$.
  Assume also that $U$ is a compact operator on $X$
  so that $1$ is not an eigenvalue of $U$,
  and $\norm{Ux-x}\le M<\infty$ for all $x\in G$.
  Then $G$ is bounded.
\end{lemma}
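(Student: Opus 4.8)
The plan is to read the hypothesis $\norm{Ux-x}\le M$ as the statement that the image $(I-U)G$ is contained in the ball of radius $M$ about the origin, and then simply to invert the operator $I-U$. Indeed, if we can show that $I-U$ has a bounded inverse on all of $X$, then for every $x\in G$ we get
\[
  \norm{x}=\norm{(I-U)^{-1}(x-Ux)}\le\norm{(I-U)^{-1}}\,\norm{x-Ux}\le M\,\norm{(I-U)^{-1}},
\]
so that $G$ is bounded, with an explicit bound $M\,\norm{(I-U)^{-1}}$. So the whole problem reduces to establishing that $I-U$ is boundedly invertible.

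This is exactly where the two hypotheses on $U$ enter. Since $1$ is not an eigenvalue of $U$, the kernel of $I-U$ is trivial, i.e.\ $I-U$ is injective. Since $U$ is compact, the Riesz--Schauder theory of compact operators (the Fredholm alternative on a Banach space) applies: for a compact operator $U$, the operator $I-U$ is injective if and only if it is surjective, and in that case it is a bijection of $X$ onto itself. Hence $I-U$ is a continuous linear bijection, and the bounded inverse theorem (a consequence of the open mapping theorem, which uses completeness of $X$) yields that $(I-U)^{-1}$ is bounded. Combining this with the first paragraph finishes the proof.

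I expect the main---indeed the only substantive---obstacle to be the invocation of the Fredholm alternative; everything else is a one-line manipulation. If one wished to avoid citing Riesz--Schauder theory, one would have to reprove the key fact that an injective compact perturbation of the identity is automatically surjective, but since that is entirely standard I would simply quote it. I would also remark that the argument uses nothing about $G$ beyond the norm bound---no completeness or convexity of $G$---since the conclusion is literally $G\subseteq(I-U)^{-1}(M\,X_1)$.
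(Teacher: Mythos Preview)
Your argument is correct and is essentially the same as the paper's: both observe that compactness of $U$ together with $1$ not being an eigenvalue forces $1\notin\sigma(U)$, so $U-I$ has a bounded inverse, and then $\norm{x}\le\norm{(U-I)^{-1}}\,\norm{Ux-x}\le\norm{(U-I)^{-1}}M$. The only difference is that you spell out the Riesz--Schauder/Fredholm alternative and the open mapping theorem explicitly, whereas the paper compresses this into the single statement that $1\notin\sigma(U)$.
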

\begin{proof}
  Since $U$ is compact and $1$ is not an eigenvalue, $1\notin\sigma(U)$,
  and so $U-I$ is invertible.
  So for any $x\in G$, $\norm{x}\le{\norm{(U-I)^{-1}}}\cdot{\norm{Ux-x}}\le{\norm{(U-I)^{-1}}}M$.
\end{proof}

\begin{proof}[A different proof of Theorem \ref{thm:krsMEAN}]
  We prove only that \cond{i} and \cond{ii'}
  imply total boundedness.
  For the other direction, refer to the earlier proof
  (see page~\pageref{proof:krsMEAN}).

  For any $\epsilon>1$, choose $R$ according to condition \cond{i},
  and define a continuous cutoff function $v_{R}$:
  \[
    v_{R}(x)=
    \begin{cases}
      1 & \abs{x}<R+1,\\
      R+2-\abs{x} & R+1\le\abs{x}\le R+2,\\
      0 & \abs{x}>R+2.
    \end{cases}
  \]
  Thus $\norm{f-fv_{R}}_p<\epsilon$ for any $f\in\sF$.
  If we can show that $\sF v_{R}$ is totally bounded for every $R>0$,
  it immediately follows that $\sF$ is totally bounded.

  We now observe that condition \cond{ii'} is still satisfied
  if $\sF$ is replaced by $\sF v_{R}$.
  To see this, note that
  \[
    \begin{split}
      \norm{fv_{R}-S_h(fv_{R})}_p
      &\le\norm{(f-S_hf)v_{R}}_p+\norm{(S_hf)v_{R}-S_h(fv_{R})}_p\\
      &\le\norm{f-S_hf}_p+\norm{(S_hf)v_{R}-S_h(fv_{R})}_p.
    \end{split}
  \]
  Next,
  \[
    S_hf(x)v_{R}(x)-S_h(fv_{R})(x)
    =\abs{B_h(x)}^{-1}\int_{B_h(x)}f(y)\parens[\big]{{v_{R}(x)-v_{R}(y)}}\dee y.
  \]
  Note that $\abs{v_{R}(x)-v_{R}(y)}\le\abs{x-y}<h$ whenever $y\in B_h(x)$,
  and furthermore $v_{R}(x)-v_{R}(y)=0$ if in addition $\abs{x}\le R$,
  provided we ensure that $h<1$.
  Under this assumption, then,
  \[
    \abs{S_hf(x)v_{R}(x)-S_h(fv_{R})} \le hS_h\abs{f\charfun{\RR \setminus B_R}}(x),
  \]
  and therefore
  \[
    \norm{(S_hf)v_{R}-S_h(fv_{R})}_p
    \le h\norm{S_h\abs{f\charfun{\RR \setminus B_R}}}_p
    \le h\norm{f\charfun{\RR \setminus B_R}}_p
    < h.
  \]
  And so we get
  \[
    \norm{fv_{R}-S_h(fv_{R})}_p \le\norm{f-(S_hf)}_p+h,
  \]
  and it follows that $\sF v_{R}$ does indeed satisfy \cond{ii'}.
  Thus we can replace $\sF$ with $\sF v_{R}$
  in the remainder of the proof.

  \medskip\noindent
  From now on, we assume without loss of generality
  that $\supp{f}\subseteq K$ for all $f\in\sF$,
  where $K\subset\RR$ is compact.
  Let $\phi_k=\abs{B_{1/k}}^{-1}\charfun{B_{1/k}}$.
  Then $f*\phi_k=S_{1/k}f \to f$ in the $L^p$ norm, uniformly for $f\in\sF$;
  and the same is true for $f*\phi_k*\phi_k$.

  Define the operator $\Phi_k\colon L^p(K) \to L^p(K)$ by
  $\Phi_kf=(f*\phi_k*\phi_k)\charfun{K}$.
  Lemma \ref{lm:cc} ensures that $\Phi_k$ is compact.

  We claim that $1$ is not an eigenvalue of $\Phi_k$.
  Assuming this, we can use Lemma~\ref{lm:sudakov}
  to conclude that $\sF$ is bounded,
  and then Mazur's theorem (Theorem \ref{tm:mazur})
  implies that $\sF$ is totally bounded,
  thus finishing the proof.

  To prove the claim, assume the contrary,
  and let a nonzero $f \in L^p(K)$
  satisfy $f=(f*\psi)\charfun{K}$, where $\psi=\phi_k*\phi_k$.
  Without loss of generality, we may assume that $f(x)>0$ for some $x$.
  Note that $f*\psi$ is continuous,
  and so $f$ has a maximum value $c>0$.
  Let $C \subseteq K$ be the compact set $\setof{x\in\RR^n\given f=c}$,
  and consider any point $x$ on the boundary of $C$.
  Then we have
  \[
    c=f(x)=\int_{\RR^n}f(x-y)\psi(y)\dee y.
  \]
  Since $f \le c$, and $f(x-y)<c$ for $y$ in some open set in which $\psi(y)>0$,
  we get
  \[
    \int_{\RR^n}f(x-y)\psi(y)\dee y<c\int_{\RR^n}\psi(y)\dee y=c,
  \]
  and so we arrive at the contradiction $c<c$.
  This completes the proof.
\end{proof}

%\fbox{\parbox{.98\linewidth}{tekst}}
%-----------------------
%-------- in case we need appendices
%\appendix
% \section{tittel}
%-----------------
%%%%%%%%%%%%%%%%%%%%%%%%%%%%%%%%%%%%%
%\noindent {\bf Acknowledgments.}

%%%%%%%%%%%%%%%%%%%%%%%%%%%%%%%%%%%%%
%------------- bibliography

\end{document}